\newcommand{\Z}{\mathbb{Z}}
\newcommand{\N}{\mathbb{N}}
\newcommand{\B}{\mathcal{B}}
\newcommand{\C}{\mathcal{C}}
\newcommand{\M}{\mathfrak{M}}
\newcommand{\Xs}{\mathsf{X}}
\newcommand{\Ys}{\mathsf{Y}}
\newcommand{\Zs}{\mathsf{Z}}
\newcommand{\Ss}{\mathsf{S}}
\newcommand{\north}{\mathsf{North}}
\newcommand{\south}{\mathsf{South}}
\newcommand{\east}{\mathsf{East}}
\newcommand{\west}{\mathsf{West}}
\newtheorem{theorem}{Theorem}
\newtheorem{lemma}{Lemma}
\newtheorem{definition}{Definition}
\newtheorem{example}{Example}
\newtheorem{corollary}{Corollary}
\theoremstyle{remark}
\title{Subshifts, MSO Logic, and Collapsing Hierarchies}
\author{Ilkka T\"orm\"a}
\institute{
TUCS -- Turku Center for Computer Science \\
University of Turku, Finland \\
\email{iatorm{@}utu.fi}}
\begin{document}

\maketitle

\begin{abstract}
We use monadic second-order logic to define two-dimensional subshifts, or sets of colorings of the infinite plane. We present a natural family of quantifier alternation hierarchies, and show that they all collapse to the third level. In particular, this solves an open problem of [Jeandel \& Theyssier 2013]. The results are in stark contrast with picture languages, where such hierarchies are usually infinite.
\end{abstract}

\keywords{subshift, MSO logic, quantifier alternation}

\section{Introduction}

A two-dimensional subshift is a %collection 
set of colorings of the infinite plane with finitely many colors. %The most 
Concrete examples are given by sets of \emph{Wang tiles}, or squares with colored edges, introduced by Wang in \cite{Wa61}. The associated \emph{tiling system}
%subshift 
%defined by such a set 
consists of all tilings of the plane %such that 
where %the edges of adjacent tiles 
overlapping edges have the same color. %The study of two-dimensional subshifts has traditionally been closely connected to computability theory and mathematical logic. In fact, when the class of tiling systems was first introduced, the motivation 
The initial motivation for Wang tiles was to use a possible algorithm for %solving 
the infinite tiling problem to recognize tautologies in %the so-called $\forall \exists \forall$-
%a fragment of 
first-order logic. %Five years later, the 
The tiling problem was %soon shown to be 
proved undecidable by Berger \cite{Be66}, and more undecidability results for tiling systems followed. %In more recent years,
More recently, %several articles have established 
strong connections between multidimensional subshifts and computability theory have been found. For example, %the articles \cite{DuRoSh12,AuSa13} independently showed 
it was %independently 
shown in \cite{DuRoSh12}, \cite{AuSa13} that every vertically constant co-RE subshift can be implemented as a letter-to-letter projection of a tiling system. %In \cite{HoMe10} it was shown that the possible topological entropies of tiling systems are exactly the right recursively enumerable nonnegative numbers. 
The topological entropies of tiling systems were characterized in \cite{HoMe10} as the right recursively enumerable nonnegative reals. %The general trend seems to be 
It seems that every conceivable behavior occurs in the class of (projections of) tiling systems, %unless there is an 
if there is no obvious geometric or computational obstruction.

In this article, we follow the approach of \cite{JeTh09,JeTh13} %and the later journal version \cite{JeTh13}, 
and define two-dimensional subshifts by monadic second-order (MSO) logical formulas. %Our main results state 
We show that certain hierarchies obtained by counting quantifier alternations are finite, solving an open problem posed in \cite{JeTh13}. Classes of %different 
finite structures defined by MSO formulas have been studied extensively. Examples include finite words, trees, grids and graphs; see \cite{MaSc08} and references therein. For words and trees, MSO formulas define exactly the regular languages, and the quantifier alternation hierarchy collapses to the second level. On the other hand, the analogous hierarchy of picture languages was shown to be infinite in \cite{MaTh97} and strict in \cite{Sc97}. Although subshifts behave more like sets of words or trees than picture languages in this sense, the reasons are different: MSO-definable languages are regular because the geometry is so simple, %that nothing else can be constructed
while the subshift hierarchy collapses since we can simulate arbitrary computation already on the third level. The concept of constructing subshifts by quantifying over infinite configurations %, which is the object of interest in this article, 
has %previously 
also been studied %at least 
in \cite{LoMaPa13} under the name of \emph{multi-choice shift spaces}, and in \cite{To14} under the more general framework of \emph{quantifier extensions}. Both %of these 
formalisms are subsumed by MSO logic.

\section{Preliminary Definitions}

\subsection{Patterns and Subshifts}

Fix a finite \emph{alphabet} $A$. A \emph{pattern} is a map $P : D \to A$ from %a finite or infinite
an arbitrary \emph{domain} $D = D(P) \subset \Z^2$ to $A$. A pattern with domain $\Z^2$ is a \emph{configuration}, and the set $A^{\Z^2}$ of all configurations %over $A$ 
is %called 
the \emph{full shift over $A$}. The set of finite patterns over $A$ is denoted by $A^{**}$, and those with domain $D \subset \Z^2$ by $A^D$. The restriction of a pattern $P$ to a smaller domain $E \subset D(P)$ is denoted $P|_E$. A pattern $P$ \emph{occurs at $\vec v \in \Z^2$} in another pattern $Q$, if we have $\vec v + \vec w \in D(Q)$ and $Q_{\vec v + \vec w} = P_{\vec w}$ for all $\vec w \in D(P)$. We denote $P \sqsubset Q$ if $P$ occurs in $Q$ at some coordinate. For a set of patterns $\Xs$, we denote $P \sqsubset \Xs$ if $P$ occurs in some element of $\Xs$.

%Every set of finite patterns $F \subset A^{**}$ defines a \emph{subshift} as the set of configurations $\Xs_F = \{ x \in A^{\Z^2} \;|\; \forall P \in F : P \not\sqsubset x \}$ where no pattern of $F$ occurs. If $F$ is finite, then $\Xs_F$ is a \emph{shift of finite type}, or SFT for short. %If every pattern in $F$ has a domain of shape $1 \times 2$ or $2 \times 1$, then $\Xs_F$ is a \emph{tiling system}. 
%The \emph{language} of a subshift $\Xs \subset A^{\Z^2}$ is defined as $\B(\Xs) = \{ P \in A^{**} \;|\; P \sqsubset \Xs \}$, and for a finite domain $D \subset \Z^2$, we denote $\B_D(\Xs) = \B(\Xs) \cap A^D$. For $\vec v \in \Z^2$, we denote by $\sigma^{\vec v} : A^{\Z^2} \to A^{\Z^2}$ the \emph{shift by $\vec v$}, defined by $\sigma^{\vec v}(x)_{\vec w} = x_{\vec w + \vec v}$ for every $x \in A^{\Z^2}$ and $\vec w \in \Z^2$. Every subshift is clearly invariant under every shift action.

A set of finite patterns $F \subset A^{**}$ defines a \emph{subshift} as the set of configurations $\Xs_F = \{ x \in A^{\Z^2} \;|\; \forall P \in F : P \not\sqsubset x \}$ where no pattern of $F$ occurs. If $F$ is finite, then $\Xs_F$ is \emph{of finite type}, or SFT. %If every pattern in $F$ has a domain of shape $1 \times 2$ or $2 \times 1$, then $\Xs_F$ is a \emph{tiling system}. 
The \emph{language} of a subshift $\Xs \subset A^{\Z^2}$ is $\B(\Xs) = \{ P \in A^{**} \;|\; P \sqsubset \Xs \}$. For a finite $D \subset \Z^2$, we denote $\B_D(\Xs) = \B(\Xs) \cap A^D$. For $\vec v \in \Z^2$, we denote by $\sigma^{\vec v} : A^{\Z^2} \to A^{\Z^2}$ the \emph{shift by $\vec v$}, defined by $\sigma^{\vec v}(x)_{\vec w} = x_{\vec w + \vec v}$ for all $x \in A^{\Z^2}$ and $\vec w \in \Z^2$. Subshift are invariant under the shift maps.

A \emph{block map} is a function $f : \Xs \to \Ys$ between two subshifts $\Xs \subset A^{\Z^2}$ and $\Ys \subset B^{\Z^2}$ defined by a finite \emph{neighborhood} $D \subset \Z^2$ and a \emph{local function} $F : \B_D(\Xs) \to B$ which is applied to every coordinate synchronously: $f(x)_{\vec v} = F(x|_{D + \vec v})$ for all $x \in \Xs$ and $\vec v \in \Z^2$. %If $D = \{0\}$, then $f$ is called a \emph{symbol map}, and is identified with the local function $F : A \to B$. A shift by any vector is a block map that commutes with all block maps.
The image of an SFT under a block map %(equivalently, the image of a tiling system under a symbol map) 
is a \emph{sofic shift}.

\begin{example}
Let $A = \{0,1\}$, and let $F \subset A^{**}$ be the set of patterns where $1$ occurs twice. Then $\Xs_F \subset A^{\Z^2}$ is the set of configurations containing at most one letter $1$. This subshift is sometimes called the \emph{sunny side up shift}, and it is sofic.

A famous example of an SFT is the \emph{two-dimensional golden mean shift} on the same alphabet, defined by the forbidden patterns $\begin{smallmatrix} 1 & 1 \end{smallmatrix}$ and $\begin{smallmatrix} 1 \\ 1 \end{smallmatrix}$. In its configurations, no two letters $1$ can be adjacent, but there are no other restrictions.
\end{example}

\subsection{Logical Formulas}

We continue the line of research of \cite{JeTh09,JeTh13}, and define subshifts by monadic second-order (MSO) formulas. We now introduce the terminology used %for the formulas 
in these articles, and then expand upon it. A \emph{structure} is a tuple $\M = (U, \tau)$, where $U$ is an \emph{underlying set}, and $\tau$ a \emph{signature} consisting of functions $f : U^n \to U$ and relations $r \subset U^n$ of different \emph{arities} $n \in \N$. A configuration $x \in A^{\Z^2}$ defines a structure $\M_x = (\Z^2, \tau_A)$, whose signature $\tau_A$ contains the following objects:
\begin{itemize}
\item Four unary functions, named $\north$, $\south$, $\east$ and $\west$, and called \emph{adjacency functions} in this article. They are interpreted in the structure $\M_x$ as $\north^{\M_x}((a,b)) = (a,b+1)$, $\east^{\M_x}((a,b)) = (a+1,b)$ and so on for $a, b \in \Z$.
\item For each symbol $a \in A$, a unary \emph{symbol predicate} $P_a$. It is interpreted as $P_a^{\M_x}(\vec v)$ for $\vec v \in \Z^2$ being true if and only if $x_{\vec v} = a$.
\end{itemize}

The MSO formulas that we use are defined with the signature $\tau_A$ as follows.
\begin{itemize}
%\item A \emph{term} is either a first-order variable or the application of an adjacency function to another term. Its \emph{depth} is the number of nested applications in it.
\item A \emph{term (of depth $k \in \N$)} is a chain of $k$ nested applications of the adjacency functions to a first-order variable.
\item An \emph{atomic formula} is either $t = t'$ or $P(t)$, where $t$ and $t'$ are terms and $P$ is either a symbol predicate or a second-order variable.
\item A \emph{formula} is either an atomic formula, or an application of a logical connective ($\wedge, \vee, \neg, \ldots$) or first- or second-order quantification to other formulas.
\end{itemize}
The \emph{radius} of a formula is the maximal depth of a term in it. %number of nested applications of the adjacency functions occurring in it.
First-order variables (usually denoted $\vec n_1, \ldots, \vec n_\ell$) hold elements of $\Z^2$, and second-order variables hold subsets of $\Z^2$. %, interpreted as new symbols. %A second-order formula is \emph{in prenex normal form}, if it is of the form $Q_1 X_1 \cdots Q_n X_n Q'_1 \vec n_1 \cdots Q'_\ell \vec n_\ell \phi$, where each $Q_i$ and $Q'_j$ is a quantifier, $X_i$ is a-second order variable, $\vec n_j$ is a first-order variable and $\phi$ is quantifier-free. Every second-order formula is equivalent to one in prenex normal form. %The formula is \emph{existential monadic}, or EMSO for short, if each $Q_i$ is an existential quantifier $\exists$.
Formulas without second-order variables are \emph{first-order}. %, if it contains no second-order variables. % The set of first-order variables that occur in a formula $\phi$ is denoted $V(\phi)$.

Let $\phi$ be a closed MSO formula, and let $D \subset \Z^2$. %, and let $N \subset V(\phi)$.
A configuration $x \in A^{\Z^2}$ is a \emph{$D$-model} for $\phi$, denoted $x \models_D \phi$, if $\phi$ is true in the structure $\M_x$ when the quantification of the first-order variables in $\phi$ is restricted to %the domain 
$D$. If $D = \Z^2$, %and/or $N = \emptyset$,
then we denote $x \models \phi$ and say that $x$ \emph{models} $\phi$. %The formula $\phi$ defines
We define a set %$\Xs_\phi \subset A^{\Z^2}$ 
of configurations
%\[ \Xs_\phi = \{ x \in A^{\Z^2} \;|\; x \models \phi \}. \]
$\Xs_\phi = \{ x \in A^{\Z^2} \;|\; x \models \phi \}$, %This set 
which is always shift-invariant, but may not be a subshift. A subshift is \emph{MSO-definable} if it equals $\Xs_\phi$ for some MSO formula $\phi$.

As we find it more intuitive %and convenient 
to quantify over configurations than %regions of symbols
subsets of $\Z^2$, and we later wish to quantify over the configurations of specific subshifts, we introduce the following definitions.
\begin{itemize}
\item The notations $\forall X[\Xs]$ and $\exists X[\Xs]$ (read \emph{for all (or exists) $X$ in $\Xs$}) define a new \emph{configuration variable} $X$, which represents a configuration of a subshift $\Xs \subset B^{\Z^2}$ over a new alphabet $B$.% If the alphabet is not explicitly mentioned, its exact nature is unimportant.
\item For $X[\Xs]$ quantified as above, $b \in B$ and a term $t$, the notation $X_t = b$ defines an atomic formula that is true if and only if the configuration represented by $X$ has the letter $b$ at the coordinate represented by $t$.
\end{itemize}
MSO formulas with configuration variables instead of ordinary second-order variables are called \emph{extended MSO formulas}, and the relation $\models$ is extended to them. %in the obvious way. 
We state without proof that %for an extended MSO formula $\phi$, if the subshifts in brackets are MSO-definable, %of the form
if the subshifts occurring in an extended MSO formula $\phi$ are MSO-definable,
%\[ \phi = Q_1 X_1[\Xs_1] \cdots Q_n X_n[\Xs_n] \psi \]
%where $\psi$ is first-order, if all of the subshifts $\Xs_i$ are MSO-definable,
then so is $\Xs_\phi$. Conversely, we can convert an MSO formula to an extended MSO formula by replacing every second-order variable with a configuration variable over the binary full shift. %MSO formulas can be seen as extended MSO formulas where every configuration variable is a full shift on the alphabet $\{0, 1\}$, and the corresponding second-order variable defines the positions of $1$'s. %the positions of $1$'s being exactly the set defined by the corresponding second-order variable. %In the rest of this article, 
Unless stated otherwise, by second-order variables (usually denoted $X_1, \ldots, X_n$) we mean configuration variables, and by MSO formulas we mean extended MSO formulas.

\begin{example}
The two-dimensional golden mean shift is defined by the formula
\[ \forall \vec n \big( P_1(\vec n) \Longrightarrow \big( P_0(\north(\vec n)) \wedge P_0(\east(\vec n)) \big) \big). \]
Also, the sunny side up shift is defined by the formula
\[ \forall \vec m \forall \vec n \big( P_1(\vec n) \Longrightarrow ( P_0(\vec m) \vee \vec m = \vec n ) \big). \]
Another way to define the sunny side up shift is to use a second-order quantifier:
\[ \arraycolsep=0pt
\begin{array}{rl}
\exists U \forall & \vec n \big( U(\vec n) \Longleftrightarrow \big( U(\north(\vec n)) \wedge U(\west(\vec n))  \big) \big) \\
& \wedge \big( P_1(\vec n) \Longrightarrow \big( U(\vec n)  \wedge \neg U(\south(\vec n)) \wedge \neg U(\east(\vec n)) \big) \big).
\end{array} \]
We can %transform this into 
produce an equivalent extended MSO formula, %with a configuration variable, 
as per the above remark:
\[ \arraycolsep=0pt
\begin{array}{rl}
\exists X[\{0,1\}^{\Z^2}] \forall & \vec n \big( X_{\vec n} = 1 \Longleftrightarrow ( X_{\north(\vec n)} = 1 \wedge X_{\west(\vec n)} = 1 ) \big) \\
& \wedge \big( P_1(\vec n) \Longrightarrow (X_{\vec n} = 1 \wedge X_{\south(\vec n)} = 0 \wedge X_{\east(\vec n)} = 0) \big).
\end{array} \]
\end{example}

\subsection{Computability Theory}

We %use the following definition for 
recall the \emph{arithmetical hierarchy}, a classical reference for which is \cite{Sa90}. A first-order arithmetical formula over $\N$ is $\Pi^0_0$ (equivalently, $\Sigma^0_0$), if it only contains bounded quantifiers (of the form $\forall n \leq k$ or $\exists n \leq k$). %It is well-known that $\Pi^0_0$ formulas define exactly the recursive relations over $\N$. 
%A formula is $\Pi^0_{k+1}$ if it is of the form $\forall n_1 \cdots \forall n_\ell \phi$ where $\phi$ is $\Sigma^0_k$, and conversely, it is $\Sigma^0_{k+1}$ if it is of the form $\exists n_1 \cdots \exists n_\ell \phi$ where $\phi$ is $\Pi^0_k$.
The formula is $\Pi^0_{k+1}$ ($\Sigma^0_{k+1}$) if it is of the form $\forall n_1 \cdots \forall n_\ell \phi$ ($\exists n_1 \cdots \exists n_\ell \phi$) where $\phi$ is $\Sigma^0_k$ ($\Pi^0_k$, respectively). Every such formula is equivalent to a $\Pi^0_k$ or $\Sigma^0_k$ one, and if it defines a subset of $\N$, that set is given the same classification. Completeness and hardness in the classes are defined using Turing reductions. For all $k \in \N$, the class $\Delta^0_{k+1} = \Pi^0_{k+1} \cap \Sigma^0_{k+1}$ contains exactly the languages decidable by Turing machines with $\Pi^0_k$ oracles. Also, $\Sigma^0_1$ is the class of recursively enumerable subsets of $\N$.

When classifying subsets of countable sets other than $\N$, we assume they are in some natural and computable bijection with $\N$. For example, a co-recursively enumerable set of finite patterns is $\Pi^0_1$. A subshift $\Xs$ is given the same classification as its language $\B(\Xs)$. If $\Xs$ is $\Pi^0_k$ for some $k \in \N$, then it can be defined by a $\Sigma^0_k$ set of forbidden patterns (the complement of $\B(\Xs)$), and a subshift defined by such a set is always $\Pi^0_{k+1}$. In particular, SFTs and sofic shifts are $\Pi^0_1$.

\begin{remark}
We use several hierarchies of subshifts obtained by counting quantifier alternations in different kinds of formulas, and the notation for them can be confusing. In general, classes defined by computability conditions (the arithmetical hierarchy) are denoted by $\Pi$ and $\Sigma$, while classes defined by MSO formulas via the modeling relation are denoted by $\bar \Pi$ and $\bar \Sigma$.
\end{remark}

\section{Hierarchies of MSO-Definable Subshifts}

In this section, we recall the definition of a hierarchy of subshift classes defined in \cite{JeTh09,JeTh13}, and then generalize it. We also state some general lemmas.

\begin{definition}
Let $C$ be a class of subshifts. An MSO formula $\psi$ is \emph{over $C$ with universal first-order quantifiers}, or $C$-u-MSO for short, if it is of the form
\[ \psi = Q_1 X_1[\Xs_1] Q_2 X_2[\Xs_2] \cdots Q_n X_n[\Xs_n] \forall \vec n_1 \cdots \forall \vec n_\ell \phi, \]
where each $Q_i$ is a quantifier, $\Xs_i \in C$, and $\phi$ is quantifier-free. If there are $k$ quantifier alternations and $Q_1$ is the existential quantifier $\exists$, then $\psi$ is called $\bar \Sigma_k[C]$, and if $Q_1$ is $\forall$, then $\psi$ is $\bar \Pi_k[C]$. The set %of configurations it defines 
$\Xs_\psi$ is given the same classification. If $C$ is the singleton class containing only the binary full shift $\{0, 1\}^{\Z^2}$, then $\psi$ is called u-MSO, and we denote $\bar \Sigma_k[C] = \bar \Sigma_k$ and $\bar \Pi_k[C] = \bar \Pi_k$. The classes $\bar \Sigma_k$ and $\bar \Pi_k$ for $k \in \N$ form the \emph{u-MSO hierarchy}.
\end{definition}

In \cite{JeTh13}, the u-MSO hierarchy was denoted by the letter $\C$, but we use the longer name for clarity. %Whenever we refer to $C$-u-MSO formulas 
In the rest of this article, $C$ denotes an arbitrary class of subshifts, unless otherwise noted. We proceed with the following result, %which was 
stated for u-MSO formulas in \cite{JeTh13}. We omit the proof, as it is essentially the same.

\begin{theorem}[Generalization of Theorem 13 of \cite{JeTh13}]
\label{thm:Compactness}
Let $\phi$ be a $C$-u-MSO formula over an alphabet $A$. Then for all $x \in A^{\Z^2}$, we have $x \models \phi$ if and only if $x \models_D \phi$ for every finite domain $D \subset \Z^2$.
\end{theorem}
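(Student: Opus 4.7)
The forward implication is immediate: from a global model, the same witness configurations for the outer $\exists$-quantifiers continue to work for any $D$-restricted model, since restricting the inner first-order universal quantifiers $\forall \vec n_1 \cdots \forall \vec n_\ell$ to a finite set $D$ only weakens the quantifier-free matrix. The substantive content is the converse, which I would prove by induction on the number $n$ of outer configuration quantifiers, using compactness of each subshift $\Xs_i$ in the product topology.

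Write $\phi = Q_1 X_1[\Xs_1] \cdots Q_n X_n[\Xs_n]\, \psi(X_1, \ldots, X_n)$ with $\psi = \forall \vec n_1 \cdots \forall \vec n_\ell\, \phi_0$, and let $r$ be the radius of $\phi$. For a finite $E \subset \Z^2$, let $\psi_E$ denote $\psi$ with each $\vec n_i$ ranging over $E$, and let $F_E \subset \Z^2$ be the finite set of coordinates within $\ell^\infty$-distance $r$ of $E$. The key \emph{locality observation} is that $\psi_E(X_1, \ldots, X_n)$ only evaluates atomic formulas $P_a(t)$ and $X_j(t) = b$ at points of $F_E$, so its truth value depends on $x$ and on the $X_j$ only through their restrictions to $F_E$. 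The base case $n = 0$ follows because $\psi$ is equivalent to the conjunction $\bigwedge_E \psi_E$ over finite $E$ by definition. If $Q_1 = \forall$, the inductive step is routine: the two universal statements (over $X_1 \in \Xs_1$ and over finite $D$) commute once the induction hypothesis is applied to the tail formula with each fixed $X_1$ substituted.

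The interesting case is $Q_1 = \exists$. Fix an exhaustion $D_1 \subset D_2 \subset \cdots$ of $\Z^2$ by finite sets, and for each $k$ choose $X_1^{(k)} \in \Xs_1$ such that $Q_2 X_2 \cdots Q_n X_n\, \psi_{D_k}(X_1^{(k)}, X_2, \ldots, X_n)$ holds. By compactness of $\Xs_1$, pass to a subsequence with $X_1^{(k_j)} \to X_1^\infty \in \Xs_1$. I claim $X_1^\infty$ is a global witness; by the induction hypothesis applied to the tail formula with $X_1^\infty$ substituted, it suffices to show that $Q_2 X_2 \cdots Q_n X_n\, \psi_E(X_1^\infty, X_2, \ldots, X_n)$ holds for every finite $E$. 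Given $E$, choose $j$ so large that $E \subseteq D_{k_j}$ and $X_1^{(k_j)}|_{F_E} = X_1^\infty|_{F_E}$. Monotonicity of any quantifier prefix under the implication $\psi_{D_{k_j}} \Rightarrow \psi_E$ yields $Q_2 \cdots Q_n\, \psi_E(X_1^{(k_j)}, X_2, \ldots, X_n)$, and the locality observation lets us replace $X_1^{(k_j)}$ by $X_1^\infty$ without changing the truth value. The main technical point to be careful about is this last substitution: one must note that, since the prefix $Q_2 \cdots Q_n$ does not touch $X_1$, the fact that $\psi_E$ depends on $X_1$ only through $X_1|_{F_E}$ lifts pointwise to the quantified statement, so agreement of $X_1^{(k_j)}$ and $X_1^\infty$ on the finite set $F_E$ suffices.
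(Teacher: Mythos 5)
Your proof is correct: the paper omits its own proof of this theorem, deferring to Theorem~13 of \cite{JeTh13}, and your argument---monotonicity of the quantifier prefix under pointwise weakening of the matrix for the forward direction, plus induction on the second-order prefix combined with compactness of $\Xs_1$ and locality of the radius-$r$ matrix for the converse---is exactly the intended compactness argument. The only step you leave implicit is that the induction hypothesis must be applied to the tail formula with $X_1$ substituted, i.e.\ regarded as a $C$-u-MSO formula over the product of $A$ with the alphabet of $\Xs_1$; this is routine but worth a sentence.
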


%This theorem has the following useful corollaries.

\begin{corollary}
Every $C$-u-MSO formula $\phi$ over an alphabet $A$ defines a subshift.
%For every $C$-u-MSO formula $\phi$ over an alphabet $A$, the set $\Xs_\phi \subset A^{\Z^2}$ is a subshift.
\end{corollary}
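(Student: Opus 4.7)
The plan is to exhibit an explicit forbidden-pattern set $F \subset A^{**}$ with $\Xs_\phi = \Xs_F$. I would take
\[ F = \{ P \in A^{**} : \text{every } y \in A^{\Z^2} \text{ with } y|_{D(P)} = P \text{ satisfies } y \not\models \phi \}, \]
and verify the two inclusions separately.

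For $\Xs_\phi \subseteq \Xs_F$, I would argue by contradiction: if $x \models \phi$ and some $P \in F$ occurred in $x$ at position $\vec v$, then $\sigma^{\vec v}(x)$ would extend $P$, so by the definition of $F$ we would have $\sigma^{\vec v}(x) \not\models \phi$, contradicting the shift-invariance of $\models$ remarked on immediately after the definition of $\Xs_\phi$.

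The other inclusion is where Theorem \ref{thm:Compactness} does the work. Assuming $x \not\models \phi$, the theorem yields a finite $D \subset \Z^2$ with $x \not\models_D \phi$. Let $r$ be the radius of $\phi$ and set $D' = D + [-r, r]^2$. The heart of the argument is the claim that whether $y \models_D \phi$ holds depends only on $y|_{D'}$: the first-order variables are bounded to $D$, so every term of depth at most $r$ evaluates into $D'$, and $y$ enters the quantifier-free matrix only through symbol predicates $P_a(t)$ (atoms of the form $t = t'$ and $X_t = b$ do not involve $y$). Granting this, every $y$ with $y|_{D'} = x|_{D'}$ also satisfies $y \not\models_D \phi$, hence $y \not\models \phi$ by the easy direction of Theorem \ref{thm:Compactness}, and we conclude $x|_{D'} \in F$; since this pattern occurs in $x$ at the origin, $x \notin \Xs_F$.

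The main obstacle is the dependency claim in the preceding paragraph. I must check that the configuration-variable witnesses $X_j[\Xs_j]$ — which themselves range over possibly infinite configurations of the subshifts $\Xs_j$ — do not implicitly pull in data about $y$ from outside $D'$. They do not, because the matrix queries $y$ only through the symbol predicates $P_a(t)$ whose terms evaluate into $D'$, while the $X_j$ are independent data picked from their respective subshifts. Once this is unpacked carefully, the remainder of the proof is routine manipulation of the definitions.
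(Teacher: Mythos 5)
Your proof is correct and takes essentially the same route as the paper's: both build a forbidden-pattern set from the finite-domain failures of $\phi$ and combine Theorem~\ref{thm:Compactness} with the observation that $y \models_D \phi$ depends only on $y|_{D+[-r,r]^2}$, where $r$ is the radius. The paper states this as a one-line consequence of the theorem, leaving the locality step implicit (it is spelled out later in the proof of Corollary~\ref{cor:UpperBounds}), so your explicit verification of that dependency claim is an elaboration rather than a deviation.
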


\begin{proof}
Let $r \in \N$ be the radius of $\phi$. By Theorem~\ref{thm:Compactness}, we have $\Xs_\phi = \Xs_F$, where
%\[ \{ x|_{D + [-r,r]^2} \;|\; D \subset \Z^2 \mbox{~finite}, x \in A^{\Z^2}, x \not\models_D \phi \} \]
$F = \{ x|_{D + [-r,r]^2} \;|\; D \subset \Z^2 \mbox{~finite}, x \in A^{\Z^2}, x \not\models_D \phi \}$.
%of forbidden patterns.%, and is a subshift by definition. 
\qed
\end{proof}

\begin{corollary}
\label{cor:UpperBounds}
For all $k, n \in \N$, we have $\bar \Pi_n[\Pi^0_k] \subset \Pi^0_{k+1}$. In particular, the u-MSO hierarchy only contains $\Pi^0_1$ subshifts.
\end{corollary}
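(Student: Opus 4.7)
My plan is to combine the compactness statement of Theorem~\ref{thm:Compactness} with a König's lemma argument to convert ``$P \in \B(\Xs_\psi)$'' into a finitary condition, and then to bound its arithmetical complexity. Fix a $\bar \Pi_n[\Pi^0_k]$ formula $\psi = Q_1 X_1[\Xs_1] \cdots Q_n X_n[\Xs_n] \forall \vec n_1 \cdots \forall \vec n_\ell\, \phi$ of radius $r$ over an alphabet $A$.

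The first step is to bound the complexity of the local check $Q \models_D \psi$ for a finite pattern $Q$ with domain $E$ and a finite $D \subset \Z^2$ satisfying $D + [-r,r]^2 \subseteq E$. The predicate unfolds to $Q_1 Y_1 \in \B_E(\Xs_1) \cdots Q_n Y_n \in \B_E(\Xs_n)\, \forall \vec n_i \in D : \phi'$, where the innermost bounded formula is decidable from the finite data $(Q, Y_1, \ldots, Y_n, D)$. Since each $\B_E(\Xs_i)$ is a $\Pi^0_k$ subset of a finite set (because $\Xs_i$ is $\Pi^0_k$), each configuration quantifier becomes a finite conjunction or disjunction over that finite set, combining $\Pi^0_k$ or $\Sigma^0_k$ membership atoms with decidable tests. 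Hence $Q \models_D \psi$ lies in the Boolean closure of $\Pi^0_k$, which is contained in $\Delta^0_{k+1} \subseteq \Pi^0_{k+1}$.

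The second step is a compactness reformulation: I claim $P \in \B(\Xs_\psi)$ iff for every finite $E \supseteq \mathrm{dom}(P)$ there exists $Q \in A^E$ extending $P$ with $Q \models_D \psi$ for every finite $D$ satisfying $D + [-r,r]^2 \subseteq E$. The forward direction uses Theorem~\ref{thm:Compactness} directly; the converse is König's lemma applied to the tree of locally consistent finite extensions inside the compact space $A^{\Z^2}$. Combining this with the first step, $P \in \B(\Xs_\psi)$ becomes $\forall E\, \exists Q\, \forall D\, [Q \models_D \psi]$, where $\forall E$ is a universal quantifier over a recursive index set and $\exists Q, \forall D$ range over finite sets. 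Since $\Pi^0_{k+1}$ is closed under universal quantification over $\N$ and under finite disjunction, $\B(\Xs_\psi) \in \Pi^0_{k+1}$, and therefore $\Xs_\psi \in \Pi^0_{k+1}$. The ``in particular'' assertion follows by taking $k = 0$: the binary full shift has decidable language and is thus $\Pi^0_0$. The main obstacle is the Boolean-closure bookkeeping, since a naive reading of the $n$ alternating $X_i$-quantifiers would suggest complexity $\Pi^0_{k+n}$; the crucial point is that once $E$ is fixed, all these quantifiers range over finite sets and therefore contribute only Boolean operations on $\Pi^0_k$ predicates.
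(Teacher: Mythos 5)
Your proposal is correct and follows essentially the same route as the paper: reduce $P \in \B(\Xs_\psi)$ via Theorem~\ref{thm:Compactness} and compactness to a condition quantified over finite domains, observe that the configuration quantifiers then range over the finite sets $\B_E(\Xs_i)$ so that the local check is $\Delta^0_{k+1}$ (you phrase this as Boolean closure of $\Pi^0_k$, the paper as an oracle machine computing $\B_E(\prod_i \Xs_i)$), and conclude $\Pi^0_{k+1}$ from the single outer unbounded universal quantifier. The differences are only in bookkeeping (your $\forall E\,\exists Q\,\forall D$ versus the paper's $\forall D\,\exists Q$).
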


\begin{proof}
Let $\phi = \forall X_1[\Xs_1] \exists X_2[\Xs_2] \ldots Q_n X_n[\Xs_n] \psi$ be a $\bar \Pi_n[\Pi^0_k]$ formula, where each $\Xs_i \subset A_i^{\Z^2}$ is a $\Pi^0_k$ subshift and $\psi$ is first-order. Then the product subshift $\prod_{i=1}^n \Xs_i$ is also $\Pi^0_k$. Let $P \in A^{**}$ be a finite pattern. Theorem~\ref{thm:Compactness}, together with a basic compactness argument, implies that $P \in \B(\Xs_\phi)$ holds if and only if for all finite domains $D(P) \subset D \subset \Z^2$, there exists a configuration $x \in A^{\Z^2}$ such that $x|_{D(P)} = P$ and $x \models_D \phi$. For a fixed $D$, denote this condition by $C_P(D)$.

We show that deciding $C_P(D)$ for given pattern $P$ and domain $D$ is $\Delta^0_{k+1}$. Denote $E = D + [-r,r]^2$, where $r \in \N$ is the radius of $\phi$, and let $L = \B_E(\prod_{i=1}^n \Xs_i)$. For a configuration $x \in A^{\Z^2}$, the condition $x \models_D \phi$ only depends on the finite pattern $x|_E \in A^E$, and is computable from it and the set $L$. Thus $C_P(D)$ is equivalent to the existence of a pattern $Q \in A^E$ such that $x|_E = Q$ implies $x \models_D \phi$ for all $x \in A^{\Z^2}$. Moreover, this can be decided by the oracle Turing machine that computes $L$ using a $\Pi^0_k$ oracle, and then goes through the finite set $A^E$, searching for such a $Q$. Thus the condition $C_P(D)$ is $\Delta^0_{k+1}$, which implies that deciding $P \in \B(\Xs_\phi)$ is $\Pi^0_{k+1}$. \qed
\end{proof}

Finally, if the final second-order quantifier of a u-MSO formula is universal, it can be dropped. This does not hold for $C$-u-MSO formulas in general. We omit the proof, as it is essentially the same as that of \cite[Lemma 7]{JeTh13}.

\begin{lemma}
\label{lem:UnivDrop}
If $k \geq 1$ is odd, then $\bar \Pi_k = \bar \Pi_{k-1}$, and if it is even, then $\bar \Sigma_k = \bar \Sigma_{k-1}$.
\end{lemma}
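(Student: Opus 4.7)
The plan is to isolate one elimination step: given any u-MSO formula
\[ \psi \;=\; Q_1 X_1 \cdots Q_{n-1} X_{n-1} \, \forall X_n[\{0,1\}^{\Z^2}] \, \forall \vec n_1 \cdots \forall \vec n_\ell \, \phi \]
whose innermost second-order quantifier is universal, I would produce an equivalent u-MSO formula with one fewer second-order quantifier. Counting alternations, this immediately gives $\bar\Pi_k = \bar\Pi_{k-1}$ when $k$ is odd (so the innermost block of $\forall\exists\forall\cdots\forall$ is $\forall$) and $\bar\Sigma_k = \bar\Sigma_{k-1}$ when $k$ is even, which is the lemma. The outer block $\forall \vec n_1 \cdots \forall \vec n_\ell$ and the preceding second-order quantifiers are carried along unchanged, so the output is again in u-MSO prenex form.

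For the elimination itself, first enumerate the terms $t_1, \ldots, t_m$ appearing in $\phi$ as arguments of $X_n$, so that every atomic formula of $\phi$ mentioning $X_n$ has the form $(X_n)_{t_i} = b$ with $b \in \{0,1\}$. Because $X_n$ ranges over the unrestricted binary full shift, at any fixed values of $\vec n_1, \ldots, \vec n_\ell$ the $m$ bits $(X_n)_{t_i(\vec n)}$ can be assigned any tuple in $\{0,1\}^m$, subject only to the unavoidable consistency condition that $t_i(\vec n) = t_j(\vec n)$ forces the bits to agree. Hence $\forall X_n \, \phi$ at fixed $\vec n$ is equivalent to $\phi$ holding under every consistent valuation of these $m$ atoms, and since $\{0,1\}^m$ is finite, this universal quantifier unfolds to a quantifier-free first-order formula
\[ \psi' \;=\; \bigwedge_{v \in \{0,1\}^m} \Big( \bigvee_{\substack{i,j\\v(i) \ne v(j)}} t_i = t_j \;\vee\; \phi\bigl[(X_n)_{t_k} \mapsto v(k)\bigr] \Big), \]
which does not mention $X_n$. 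Each clause reads ``either the valuation $v$ is inconsistent with the actual locations of the $t_i$, or $\phi$ holds under it.'' Since $\forall X_n$ and $\forall \vec n_1 \cdots \forall \vec n_\ell$ commute, replacing $\forall X_n \, \phi$ by $\psi'$ inside the outer universal block yields the desired reduction.

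The one subtle point, and the reason the claim fails for $C$-u-MSO in general, is that the argument uses essentially that $X_n$ is quantified over the binary \emph{full} shift. If $X_n$ were constrained to a non-trivial subshift $\Xs$, then arbitrary boolean valuations would have to be filtered through the forbidden patterns of $\Xs$, and that filter cannot in general be encoded by a finite quantifier-free first-order expansion. So the real work is justifying the expansion above; the alternation bookkeeping that turns this one-step elimination into the statement of the lemma is then automatic.
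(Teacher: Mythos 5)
Your proof is correct and is essentially the argument the paper defers to (the paper omits its own proof, citing \cite[Lemma 7]{JeTh13}): since the innermost universal second-order variable ranges over the \emph{full} binary shift and commutes with the universal first-order block, it can be eliminated by a finite case expansion over all valuations of the finitely many atoms $(X_n)_{t_i}$, guarded by the equality atoms $t_i = t_j$ that detect coinciding positions. Your closing remark about why this fails for general $C$-u-MSO formulas also matches the paper's Example~\ref{ex:MirrorNew}.
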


\begin{example}
\label{ex:MirrorNew}
Define the \emph{mirror shift} $\mathsf{M} \subset \{0,1,\#\}^{\Z^2}$ by the forbidden patterns $\begin{smallmatrix} a \\ \# \end{smallmatrix}$ and $\begin{smallmatrix} \# \\ a \end{smallmatrix}$ for $a \neq \#$, every pattern $\{\vec 0 \mapsto \#, (n,0) \mapsto \#\}$, and every pattern $\{(-n,0) \mapsto a, \vec 0 \mapsto \#, (n,0) \mapsto b\}$ for $n \in \N$ and $a \neq b$. A `typical' configuration of $\mathsf{M}$ contains one infinite column of $\#$-symbols, whose left and right sides are mirror images of each other. It is well-known that $\mathsf{M}$ is not sofic. We show that it can be implemented by an SFT-u-MSO formula $\psi = \forall X[\Xs] \forall \vec n_1 \forall \vec n_2 \forall \vec n_3 \phi$ in the class $\bar \Pi_1[\mathrm{SFT}]$. This also shows that Lemma~\ref{lem:UnivDrop} fails outside the u-MSO hierarchy.

\begin{figure}[htp]
\begin{center}
\begin{tikzpicture}[scale=.9]

\fill[black!20] (1.5,1) -- (2.25,1.75) -- (5.25,1.75) -- (6,1);
\fill[black!40] (2.25,1.75) -- (3.75,3.25) -- (5.25,1.75);

\draw[very thick] (1.5,1) -- (3.75,3.25) -- (6,1);
\draw[very thick,densely dotted] (2.25,1.75) -- (5.25,1.75);

\node[draw,fill=white,minimum size=.3cm,inner sep=0] () at (2.25,1.75) {$a$};
\node[draw,fill=white,minimum size=.3cm,inner sep=0] () at (3.75,3.25) {$b$};
\node[draw,fill=white,minimum size=.3cm,inner sep=0] () at (5.25,1.75) {$c$};

\draw[step=.5] (.5,.9999) grid (7,4);

\end{tikzpicture}
\end{center}
\caption{A pattern of $\Xs$ in Example~\ref{ex:MirrorNew}, containing its entire alphabet.}
\label{fig:Mirror}
\end{figure}
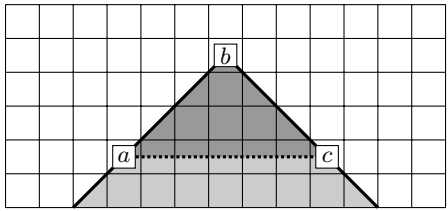

Let $\Xs$ be the SFT whose alphabet is seen in Figure~\ref{fig:Mirror}, defined by the obvious $2 \times 2$ forbidden patterns. %and whose forbidden patterns are exactly those where the lines or colors of adjacent or diagonally adjacent tiles do not match. 
Define the formula $\phi$ as $\phi_1 \wedge (\phi_2 \Longrightarrow \phi_3)$, where
\begin{align*}
\phi_1 & = P_\#(\vec n_2) \Longleftrightarrow P_\#(\north(\vec n_2)) \\
\phi_2 & = X_{\vec n_1} = a \wedge X_{\vec n_2} = b \wedge X_{\vec n_3} = c \wedge P_\#(\vec n_2) \\
\phi_3 & = \neg P_\#(\vec n_1) \wedge \neg P_\#(\vec n_3) \wedge (P_0(\vec n_1) \Longleftrightarrow P_0(\vec n_3))
\end{align*}
It is easy to see that the subshift $\Xs_\psi$ is exactly $\mathsf{M}$, with $\psi$ defined as above.
\end{example}

\section{The u-MSO Hierarchy}

The u-MSO hierarchy is %arguably the most 
a quite natural hierarchy of MSO-definable subshifts. Namely, the lack of existential first-order quantification makes it easy to prove that every u-MSO formula %in the hierarchy 
defines a subshift, and quantifier alternations give rise to interesting hierarchies in many contexts. %the number of alternations is the most obvious and studied measure of complexity of logical formulas. 
The following %results are previously 
is already known.

\begin{theorem}[\cite{JeTh13}]
The class of subshifts defined by formulas of the form $\forall \vec n \phi$, where $\phi$ is first-order, is exactly the class of SFTs. The class $\bar \Pi_0 = \bar \Sigma_0$ consists of the \emph{threshold counting shifts}, which are obtained from subshifts of the form
%\[ \{ x \in A^{\Z^2} \;|\; \mbox{$P$ occurs in $x$ at most $n$ times} \} \]
$\{ x \in A^{\Z^2} \;|\; \mbox{$P$ occurs in $x$ at most $n$ times} \}$
for $P \in A^{**}$ and $n \in \N$ using finite unions and intersections. Finally, the class $\bar \Sigma_1$ consists of exactly the sofic shifts.
\end{theorem}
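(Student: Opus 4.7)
The plan is to prove each of the three characterizations separately, combining Theorem~\ref{thm:Compactness} with standard syntactic manipulations. For the SFT characterization, given a finite forbidden set $F$ of maximum radius $r$, I would construct a quantifier-free formula $\phi(\vec n)$ that conjoins, for each $Q \in F$, the disjunction $\bigvee_{\vec w \in D(Q)} \neg P_{Q_{\vec w}}(t_{\vec w}(\vec n))$, where $t_{\vec w}$ is the depth-$\leq r$ term representing $\vec n + \vec w$; then $\forall \vec n\, \phi(\vec n)$ defines $\Xs_F$. For the converse, I would observe that any quantifier-free $\phi(\vec n)$ of radius $r$ has truth value at $\vec n$ depending only on $x|_{\vec n + [-r,r]^2}$, so the finitely many patterns on $[-r,r]^2$ at which $\phi$ fails form a finite forbidden set, and Theorem~\ref{thm:Compactness} certifies that $\Xs_\phi$ equals the resulting SFT.

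For the $\bar \Pi_0$ characterization, I would first encode ``$P$ occurs at most $n$ times'' as
\[\forall \vec n_0 \cdots \forall \vec n_n \Bigl( \bigvee_{i < j} \vec n_i = \vec n_j \;\vee\; \bigvee_i \neg \mathrm{occ}_P(\vec n_i) \Bigr),\]
where $\mathrm{occ}_P(\vec n)$ is a quantifier-free formula describing an occurrence of $P$ at $\vec n$, and then verify that $\bar \Pi_0$ is closed under finite conjunction (immediate) and finite disjunction (by renaming first-order variables to disjoint sets). For the converse, I would put the quantifier-free body of a $\bar \Pi_0$ formula in conjunctive normal form, distribute $\forall$ over $\wedge$, and rewrite each clause $\forall \vec n_1 \cdots \vec n_\ell \bigvee_i L_i$ as $\neg \exists \vec n_1 \cdots \vec n_\ell \bigwedge_i \neg L_i$. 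The conjunction of literals on the right, after resolving term equalities and partitioning variables by the literal-interaction graph, should describe a finite list of concrete macro-patterns that must each occur a specified number of times (with negated equality literals encoding distinctness constraints), whose negation is a Boolean combination of ``macro-pattern $Q$ occurs at most $k$ times'' assertions, i.e., threshold counting.

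For the $\bar \Sigma_1$ characterization, I would realize a sofic shift $\Ys \subseteq A^{\Z^2}$ as a one-block factor $\pi(\Xs)$ of an SFT $\Xs \subseteq (A \times B)^{\Z^2}$; Part~1 then supplies a $\forall \vec n\, \phi$ definition of $\Xs$, and encoding the $B$-coordinate by $\lceil \log_2 |B| \rceil$ existentially quantified binary configurations yields a $\bar \Sigma_1$ formula for $\Ys$. For the converse, the inner part of a $\bar \Sigma_1$ formula $\exists X_1 \cdots X_m \forall \vec n_1 \cdots \vec n_\ell \phi$ defines a $\bar \Pi_0$ subshift on the product alphabet $A \times \{0,1\}^m$, which by Part~2 is threshold counting; I would then show that every threshold counting shift is sofic via a standard auxiliary-marker construction for ``$P$ occurs at most $n$ times'' (use $n$ additional binary layers, each constrained to carry at most one mark via the sofic sunny side up condition, plus SFT constraints forcing every occurrence of $P$ to be marked by some layer), appealing to closure of sofic shifts under Boolean combinations. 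Projecting out the $X_i$-coordinates is a block map, so $\Xs_\phi$ is sofic.

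The step I expect to be most delicate is the converse direction of the threshold counting characterization: normalizing each clause's existential into a canonical ``macro-pattern occurs at least $k$ times'' shape requires careful bookkeeping when positive equality literals $t(\vec n_i) = t'(\vec n_j)$ force rigid relative displacements between first-order variables (collapsing components of the literal-interaction graph into single macro-patterns), while negated equality literals must be tracked as distinctness constraints contributing to the count $k$. Once that combinatorial normalization is clean, the SFT reduction in Part~1 and the marker construction in Part~3 proceed routinely.
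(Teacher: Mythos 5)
The paper gives no proof of this theorem; it is imported wholesale from \cite{JeTh13}, so there is nothing internal to compare against. Your reconstruction follows the same route as that source, and the outline is sound, but two points deserve flagging. First, for your SFT characterization to be true you must read ``$\phi$ is first-order'' as ``$\phi$ is quantifier-free'': if inner first-order quantifiers were permitted, the formula $\forall \vec m \, \forall \vec n \, (P_1(\vec n) \Rightarrow (P_0(\vec m) \vee \vec m = \vec n))$ would have the stated form $\forall \vec m \, \phi$ yet defines the sunny side up shift, which is not an SFT. With that reading both of your directions for Part~1 go through, and your Part~3 is also fine (closure of two-dimensional sofic shifts under finite unions, intersections and block-map images is standard, and your marker construction for the threshold shifts works).

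The one genuine gap is the combinatorial core of the $\bar\Pi_0$ converse, which you correctly identify as delicate but then only assert ``should'' work. Two sub-issues are hidden there. (a) After negating a clause you do not immediately get a conjunction of \emph{positive} occurrence constraints: a literal $\neg P_a(t)$ must be expanded as $\bigvee_{b \neq a} P_b(t)$ and the resulting disjunction redistributed before each connected component of the interaction graph yields a concrete macro-pattern; this blows up the clause into a further conjunction of negated existentials, which is harmless but must be said. (b) The remaining statement --- that the negation of ``there exist $\ell$ positions, each carrying a prescribed macro-pattern, with finitely many prescribed relative displacements forbidden between specified pairs'' is a Boolean combination of threshold conditions --- is precisely where the work lies. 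The standard argument is a pigeonhole/inclusion--exclusion: if some macro-pattern occurs sufficiently many times there is always room to satisfy all avoidance constraints, so the condition reduces to occurrence-count bounds together with finitely many low-count exceptional cases, each expressible via thresholds on superimposed patterns. Without this lemma spelled out, Part~2 (and hence the reduction in Part~3 that relies on it) is an outline rather than a proof.
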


We %now proceed to 
show that the %u-MSO 
hierarchy collapses to the third level, which consists of exactly the $\Pi^0_1$ subshifts. This gives negative answers to the %open 
questions posed in \cite{JeTh13} of whether the hierarchy is infinite, and whether it only contains sofic shifts.

\begin{theorem}
\label{thm:CHierarchy}
For all $n \geq 2$ we have $\Pi^0_1 = \bar \Pi_n$.
\end{theorem}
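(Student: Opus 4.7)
The upper bound $\bar \Pi_n \subseteq \Pi^0_1$ for every $n \geq 0$ is immediate from Corollary~\ref{cor:UpperBounds}, since the binary full shift is decidable and so fits the $k = 0$ case. For the converse, $\bar \Pi_2 \subseteq \bar \Pi_n$ holds for each $n \geq 2$ by padding with dummy second-order quantifiers that impose no constraint on the formula. Consequently, the theorem reduces to showing $\Pi^0_1 \subseteq \bar \Pi_2$.

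The key structural observation is that a $\bar \Pi_2$ formula $\forall X \exists Y \forall \vec n\, \phi$ describes exactly a ``universal slice'' of a sofic shift: the subformula $\exists Y \forall \vec n\, \phi$ defines, via the $\bar \Sigma_1$-characterisation of sofic shifts, a sofic shift $\mathcal S$ over the product alphabet $A \times \{0,1\}$ whose elements are compatible pairs $(x, X)$, and the outer quantifier produces $\Xs_\phi = \{x \in A^{\Z^2} \mid (x, X) \in \mathcal S \text{ for every } X \in \{0,1\}^{\Z^2}\}$. The task is therefore to realise every $\Pi^0_1$ subshift in precisely this form.

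Let $\Xs \subseteq A^{\Z^2}$ be $\Pi^0_1$ and let $M$ be a Turing machine enumerating its forbidden patterns. I would build $\mathcal S$ so that $X$ plays the role of a \emph{challenge}: a locally checkable marker configuration in $\{0,1\}^{\Z^2}$ (for instance, two isolated $1$s at opposite corners of an axis-aligned rectangle $R$, with $0$s everywhere else) requests that the sub-pattern $x|_R$ be inspected. Through the auxiliary configuration $Y$, which is existentially projected out of $\mathcal S$, the sofic rules enforce that either (i) $X$ fails to present a well-formed challenge at the position in question, which is a purely local test, or (ii) $Y$ encodes, anchored to $R$, an infinite, locally valid, non-halting space-time diagram of $M$ run on input $x|_R$. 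Local compatibility of $Y$ with $M$'s transition table is SFT-expressible, so after projecting $Y$ we retain a sofic $\mathcal S$.

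Correctness then follows because $x \in \Xs$ exactly when no finite sub-pattern of $x$ is ever enumerated by $M$: if $x \in \Xs$, then for every challenge region $R$ the pattern $x|_R$ is one on which $M$ never halts, and a non-halting trace $Y$ can be supplied; conversely, if some forbidden $P$ occurs at $\vec v$ in $x$, then taking $X$ to mark $R = \vec v + D(P)$ demands a non-halting trace of $M$ on $P$, which does not exist since $M$ halts on $P$. The principal technical obstacle is implementing the marker-and-simulation apparatus as genuine SFT rules: challenge rectangles can have arbitrary dimensions, an adversarial $X$ may present many (possibly overlapping) challenges at once, and each simulation must read the corresponding sub-pattern of $x$ as its initial tape contents. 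Standard hierarchical self-similar tilings, in the spirit of Robinson-style constructions or the fixed-point method of Durand--Romashchenko--Shen, allow the computation zones and the Turing machine simulations to be laid out by local rules, and the bulk of the work lies in coordinating these ingredients so that \emph{every} adversarial $X$ is answered by some $Y$.
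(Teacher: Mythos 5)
Your reduction to $\Pi^0_1 \subseteq \bar \Pi_2$ and the ``universal slice of a sofic shift'' reformulation are both correct and match the paper's strategy, as does the overall challenge--response architecture (a universally quantified layer designates a finite region, an existentially quantified layer certifies by a simulated computation that the designated sub-pattern of $x$ is allowed). But the entire content of this theorem is the local construction realizing that architecture, and that is exactly what your sketch defers. Two points in particular are not minor bookkeeping. First, your claim that failure of $X$ to present a well-formed challenge ``is a purely local test'' is false: since $X$ ranges over the \emph{full} binary shift, well-formedness (exactly two $1$s, in general position) is a global property, and for every malformed $X$ --- no $1$s, infinitely many $1$s, overlapping or nested marker pairs --- the existential layer must supply a locally checkable certificate, and these certificates must be globally consistent. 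This is precisely where the paper's proof spends its effort: it marks \emph{defects} where the universal layer violates the intended local rules, lets every ``seed'' spawn a computation square that grows until its boundary meets a defect, and proves a disjointness lemma for the resulting squares so that the existential layer can always be completed. An appeal to Robinson-style or fixed-point self-similar tilings does not substitute for this, because those build a fixed hierarchical structure, whereas here the computation zones must be anchored to adversarially placed markers of arbitrary size, number and relative position.

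Second, having the machine read the two-dimensional pattern $x|_R$ ``as its initial tape contents'' hides a real step: a space-time diagram takes a one-dimensional input, so $x|_R$ must be linearized by local rules. The paper sidesteps this with a device your sketch lacks: the \emph{universal} player also supplies a word $w$ claimed to be the row-by-row encoding of $x|_R$, and the existential player either (state $q_1$) confirms the encoding and then runs the co-enumeration of the complement of $\B(\Xs)$ forever, or (state $q_2$) exhibits a single \emph{candidate error} --- one coordinate where $w$ mis-encodes $x|_R$ --- and verifies it. This turns the two-dimensional read-off into a one-bit local comparison and is what makes the whole construction implementable by finitely many forbidden patterns. Without either this trick or an explicit sweeping/linearization gadget, and without the defect-and-seed mechanism for degenerate challenges, the proposal is an accurate outline of what must be proved rather than a proof.
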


\begin{proof}
As we have $\bar \Pi_n \subset \Pi^0_1$ by Corollary~\ref{cor:UpperBounds}, and clearly $\bar \Pi_n \subset \bar \Pi_{n+1}$ also holds, %we only need
it suffices to prove $\Pi^0_1 \subset \bar \Pi_2$. Let thus $\Xs \subset A^{\Z^2}$ be a $\Pi^0_1$ subshift. We construct an MSO formula of the form $\phi = \forall Y[B^{\Z^2}] \exists Z[C^{\Z^2}] \forall \vec n \psi(\vec n, Y, Z)$ such that $\Xs_\phi = \Xs$.

The main idea is the following. We use the universally quantified configuration $Y$ to specify a finite square $R \subset \Z^2$ and a word $w \in A^*$, which may or may not encode the pattern $x_R$ of a configuration $x \in A^{\Z^2}$. The existentially quantified $Z$ enforces that either $w$ does not correctly encode $x_R$, of that it encodes \emph{some} pattern of $\B(\Xs)$. As $R$ and $w$ are arbitrary and universally quantified, this guarantees $x \in \Xs$. The main difficulty is that $Y$ comes from a full shift, so we have no control over it; there may be infinitely many squares, or none at all.

First, we define an auxiliary SFT $\Ys \subset B^{\Z^2}$, whose configurations contain the aforementioned squares. The alphabet $B$ consists of the tiles seen in Figure~\ref{fig:FirstLayer}, where every $w_i$ ranges over $A$, and %its forbidden patterns are exactly those $2 \times 2$ patterns in which some colors or lines of neighboring tiles do not match. 
it is defined by the set $F_\Ys$ of $2 \times 2$ forbidden patterns where some colors or lines of neighboring tiles do not match.
%Denote this set by $F_\Ys \subset B^{2 \times 2}$. 
A configuration of $\Ys$ contains at most one maximal pattern colored with the lightest gray in Figure~\ref{fig:FirstLayer}, and if it is finite, its domain is a square. We call this domain the \emph{input square}, and the word $w \in A^*$ that lies above it is called the \emph{input word}.

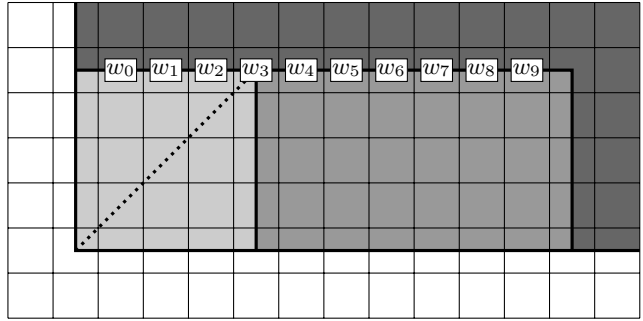
\begin{figure}[htp]
\begin{center}
\begin{tikzpicture}[scale=.6]

\fill[black!20] (1.5,1.5) rectangle (5.5,5.5);
\fill[black!40] (5.5,1.5) rectangle (12.5,5.5);
\fill[black!60] (1.5,7) -- (1.5,5.5) -- (12.5,5.5) -- (12.5,1.5) -- (14,1.5) -- (14,7);
\draw[very thick] (1.5,7) -- (1.5,1.5) -- (14,1.5);
\draw[very thick] (1.5,5.5) -- (12.5,5.5) -- (12.5,1.5);
\draw[very thick] (5.5,5.5) -- (5.5,1.5);
\draw[very thick, dotted] (1.5,1.5) -- (5.5,5.5);

\foreach \x in {0,1,...,9}{
  \draw[fill=white] (2.15+\x,5.25) rectangle ++(.7,.5);
  \node () at (2.5+\x,5.5) {$w_{\x}$};
}
\draw (0,0) grid (14,7);

\end{tikzpicture}
\end{center}
\caption{A pattern of $\Ys$. In this example, the input word $w \in A^*$ is of length $10$.}
\label{fig:FirstLayer}
\end{figure}

We now define another SFT $\Ss$, this time on the alphabet $A \times B \times C$. The alphabet $C$ is more complex than $B$, and we specify it in the course of the construction. The idea is to simulate a computation in the third layer to ensure that if the second layer contains a valid configuration of $\Ys$ and the input word encodes the contents of the input square in the first layer, then that square pattern is in $\B(\Xs)$. We also need to ensure that a valid configuration exists even if the encoding is incorrect, or if second layer is not in $\Ys$. For this, every locally valid square pattern of $\Ys$ containing an input square will be covered by another square pattern in the third layer, inside which we perform the computations. We will force this pattern to be infinite if the second layer is a configuration of $\Ys$.

Now, we describe a configuration $(x,y,z) \in \Ss$. %A $2 \times 2$ rectangle $R \subset \Z^2$ is called \emph{defective} if $y|_R \in F_\Ys$, and its elements are called \emph{defects}. 
The coordinates of every $2 \times 2$ rectangle $R \subset \Z^2$ with $y|_R \in F_\Ys$ are called \emph{defects}.
A non-defect coordinate $\vec v \in \Z^2$ such that 
$y_{\vec v} = \tikz[scale=.4,baseline=1]{
\fill[black!20] (.5,.5) rectangle (1,1);
\draw (0,0) rectangle (1,1);
\draw[thick] (.5,1) -- (.5,.5) -- (1,.5);
\draw[thick, densely dotted] (.5,.5) -- (1,1);}$
 is called a \emph{seed}. Denote $C = C_1 \cup C_2$, where $C_1$ is the set of tiles depicted in Figure~\ref{fig:CTiles} (a). Their adjacency rules in $\Ss$ are analogous to those of $\Ys$. The rules of $\Ss$ also force the set of seeds to coincide with the coordinates $\vec v \in \Z^2$ such that $z_{\vec v} =\tikz[scale=.4,baseline=1]{
\fill[black!20] (.5,.5) rectangle (1,1);
\draw (0,0) rectangle (1,1);
\draw[thick] (.5,1) -- (.5,.5) -- (1,.5);}$.
%contains the leftmost tile on the bottom row of the figure.
These coordinates are the southwest corners of \emph{computation squares} in $z$, whose square shape is again enforced by a diagonal signal. The southwest half of a computation square is colored with letters of $C_2$. See Figure~\ref{fig:CTiles} (b) for an example of a computation square.

\begin{figure}[htp]
\begin{center}
\begin{tikzpicture}

\node () at (-.5,2) {a)};

% Bottom row

% SW corner
\begin{scope}[shift={(0,0)}]
\fill[black!20] (.5,.5) rectangle (1,1);
\draw[very thick] (.5,1) -- (.5,.5) -- (1,.5);
\draw (0,0) rectangle (1,1);
\end{scope}

% S edge
\begin{scope}[shift={(1.25,0)}]
\fill[black!20] (0,.5) rectangle (1,1);
\draw[very thick] (0,.5) -- (1,.5);
\draw (0,0) rectangle (1,1);
\end{scope}

% SE corner
\begin{scope}[shift={(2.5,0)}]
\fill[black!20] (0,.5) -- (.5,.5) -- (0,1);
\fill[black!40] (0,1) -- (.5,1) -- (.5,.5);
\draw (0,1) -- (.5,.5);
\draw[very thick] (0,.5) -- (.5,.5) -- (.5,1);
\draw (0,0) rectangle (1,1);
\end{scope}

% E edge
\begin{scope}[shift={(3.75,0)}]
\fill[black!40] (0,0) rectangle (.5,1);
\draw[very thick] (.5,0) -- (.5,1);
\draw (0,0) rectangle (1,1);
\end{scope}

% E edge signal
\begin{scope}[shift={(5,0)}]
\fill[black!40] (0,0) rectangle (.5,1);
\draw[very thick] (.5,1) -- (.5,0);
\draw[very thick,densely dotted] (.25,1) -- (.25,0);
\draw (0,0) rectangle (1,1);
\end{scope}

% E edge signal stop
\begin{scope}[shift={(6.25,0)}]
\fill[black!40] (0,0) rectangle (.5,1);
\draw[very thick] (.5,1) -- (.5,0);
\draw[very thick,densely dotted,->] (.25,1) -- (.25,.5);
\draw (0,0) rectangle (1,1);
\end{scope}

% NE corner signal
\begin{scope}[shift={(7.5,0)}]
\fill[black!40] (0,0) rectangle (.5,.5);
\draw[very thick] (0,.5) -- (.5,.5) -- (.5,0);
\draw[very thick,densely dotted] (0,.25) -- (.25,.25) -- (.25,0);
\draw (0,0) rectangle (1,1);
\end{scope}

% Blank white
\begin{scope}[shift={(8.75,0)}]
\draw (0,0) rectangle (1,1);
\end{scope}

% Diagonal
\begin{scope}[shift={(10,0)}]
\fill[black!20] (0,0) -- (1,0) -- (0,1);
\fill[black!40] (1,0) -- (0,1) -- (1,1);
\draw (0,1) -- (1,0);
\draw (0,0) rectangle (1,1);
\end{scope}

% Top row

% W edge
\begin{scope}[shift={(0,1.25)}]
\fill[black!20] (.5,0) rectangle (1,1);
\draw[very thick] (.5,0) -- (.5,1);
\draw (0,0) rectangle (1,1);
\end{scope}

% NW corner
\begin{scope}[shift={(1.25,1.25)}]
\fill[black!20] (.5,0) -- (.5,.5) -- (1,0);
\fill[black!40] (.5,.5) -- (1,.5) -- (1,0);
\draw (.5,.5) -- (1,0);
\draw[very thick] (.5,0) -- (.5,.5) -- (1,.5);
\draw[very thick,densely dotted] (.5,.25) -- (1,.25);
\draw (0,0) rectangle (1,1);
\end{scope}

% N edge signal
\begin{scope}[shift={(2.5,1.25)}]
\fill[black!40] (0,0) rectangle (1,.5);
\draw[very thick] (0,.5) -- (1,.5);
\draw[very thick,densely dotted] (0,.25) -- (1,.25);
\draw (0,0) rectangle (1,1);
\end{scope}

% N edge signal stop
\begin{scope}[shift={(3.75,1.25)}]
\fill[black!40] (0,0) rectangle (1,.5);
\draw[very thick] (0,.5) -- (1,.5);
\draw[very thick,densely dotted,->] (0,.25) -- (.5,.25);
\draw (0,0) rectangle (1,1);
\end{scope}

% N edge
\begin{scope}[shift={(5,1.25)}]
\fill[black!40] (0,0) rectangle (1,.5);
\draw[very thick] (0,.5) -- (1,.5);
\draw (0,0) rectangle (1,1);
\end{scope}

% NE corner
\begin{scope}[shift={(6.25,1.25)}]
\fill[black!40] (0,0) rectangle (.5,.5);
\draw[very thick] (0,.5) -- (.5,.5) -- (.5,0);
\draw (0,0) rectangle (1,1);
\end{scope}

% NE corner signal stop
\begin{scope}[shift={(7.5,1.25)}]
\fill[black!40] (0,0) rectangle (.5,.5);
\draw[very thick] (0,.5) -- (.5,.5) -- (.5,0);
\draw[very thick,densely dotted,->] (0,.25) -- (.25,.25);
\draw (0,0) rectangle (1,1);
\end{scope}

% Blank gray
\begin{scope}[shift={(8.75,1.25)}]
\draw[fill=black!20] (0,0) rectangle (1,1);
\node () at (.5,.5) {$C_2$};
\end{scope}

% Blank dark gray
\begin{scope}[shift={(10,1.25)}]
\draw[fill=black!40] (0,0) rectangle (1,1);
\end{scope}

\begin{scope}[shift={(2.5,-5)},scale=.5]

\node () at (-1,8.5) {b)};

\fill[black!20] (1.5,1.5) -- (7.5,1.5) -- (1.5,7.5);
\fill[black!40] (7.5,7.5) -- (7.5,1.5) -- (1.5,7.5);

\draw[thick] (1.5,1.5) rectangle (7.5,7.5);
\draw[thick,densely dotted,->] (1.5,7.25) -- (7.25,7.25) -- (7.25,3.5);
\draw (1.5,7.5) -- (7.5,1.5);

\draw[dashed] (1.5,5.5) -- (5.5,5.5) -- (5.5,1.5);

\foreach \x/\y in {0/0,1/0,7/3,8/3,7/2,8/2,9/6,10/6,9/5,10/5,9/4,10/4,3/8,4/8}{
	\node () at (\x+.75,\y+.75) {\scriptsize D};
}

\draw (0,0) grid (12,9);

\end{scope}

\end{tikzpicture}
\end{center}
\caption{The alphabet $C$ (a) and a pattern of the third layer of $\Ss$ (b), with the elements of $C_2$ represented by the featureless light gray tiles. The dashed line represents the border of an input square on the second layer. Defects are marked with a small D.}
\label{fig:CTiles}
\end{figure}
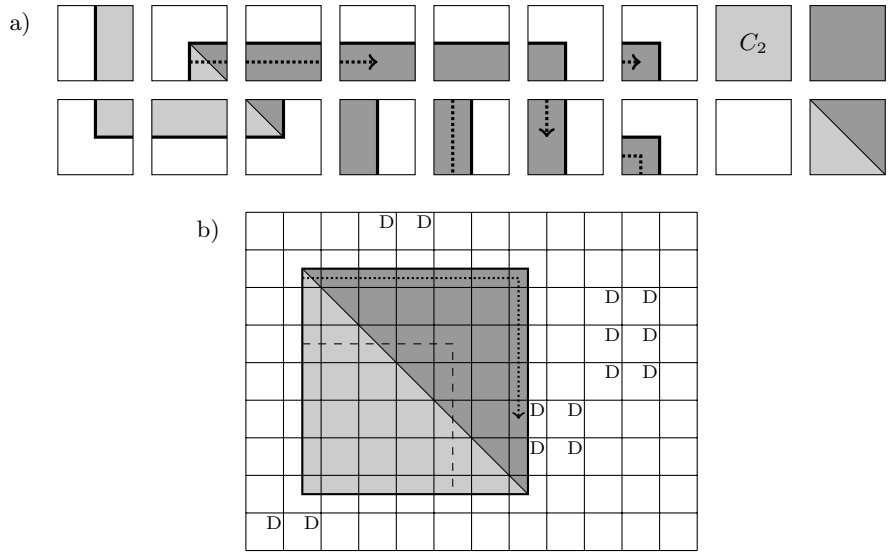

A computation square may not contain defects or coordinates $\vec v \in \Z^2$ such that $y_{\vec v} = \tikz[scale=.4,baseline=1]{\draw (0,0) rectangle (1,1);}$ except on its north or east border, and conversely, one of the borders will contain a defect. This is enforced by a signal emitted from the northwest corner of the square (the dotted line in Figure~\ref{fig:CTiles} (b)), which travels along the north and east borders, and disappears when it encounters a defect.

We now describe the set $C_2$, and for that, let $M$ be a Turing machine with input alphabet $\Sigma = A \times (A \cup \{0, 1, \#\})$ and two initial states $q_1$ and $q_2$. This machine is simulated on the southwest halves of the computation squares in a standard way, and we will fix its functionality later. The alphabet $C_2$ is shown in Figure~\ref{fig:C2Tiles}. Note that the colors and lines in $C_2$ are disjoint from those in $C_1$, even though the figures suggest otherwise. The idea is to initialize the machine $M$ with either the input word (if it correctly encodes the input square), or a proof that the encoding is incorrect, in the form of one incorrectly encoded symbol.

\begin{figure}[htp]
\begin{center}
\begin{tikzpicture}

% Bottom row

% Blank white
\begin{scope}[shift={(0,0)}]
\draw[fill=black!20] (0,0) rectangle (1,1);
\end{scope}

% Input state
\begin{scope}[shift={(1.5,0)}]
\fill[black!20] (0,0) rectangle (1,1);
\draw[thick] (.5,.5) -- (.5,1);
\draw[fill=white] (.25,.25) rectangle (.75,.75);
\node () at (.5,.5) {$a$};
\node[above] () at (.5,1) {$a0, q_0$};
\draw (0,0) rectangle (1,1);
\end{scope}

% Input
\begin{scope}[shift={(3,0)}]
\fill[black!20] (0,0) rectangle (1,1);
\draw[densely dotted] (.5,.5) -- (.5,1);
\draw[fill=white] (.5,.5) circle (.25);
\node () at (.5,.5) {$a$};
\node[above] () at (.5,1) {$ac$};
\draw (0,0) rectangle (1,1);
\end{scope}

% Letter
\begin{scope}[shift={(4.5,0)}]
\fill[black!20] (0,0) rectangle (1,1);
\draw[densely dotted] (.5,0) -- (.5,1);
\node[above] () at (.5,1) {$s$};
\node[below] () at (.5,0) {$s$};
\draw (0,0) rectangle (1,1);
\end{scope}

% Special cell
\begin{scope}[shift={(6,0)}]
\fill[black!20] (0,0) rectangle (1,1);
\draw[dashed] (.5,.5) -- (.5,1);
\draw[thick] (.5,.5) -- (1,1);
\draw[fill=white] (.5,.25) -- (.25,.5) -- (.5,.75) -- (.75,.5) -- cycle;
\node () at (.5,.5) {$b$};
\node[above] () at (.5,1) {$b$};
\draw (0,0) rectangle (1,1);
\end{scope}

% Special signal
\begin{scope}[shift={(7.5,0)}]
\fill[black!20] (0,0) rectangle (1,1);
\draw[dashed] (.5,0) -- (.5,1);
\node[below] () at (.5,0) {$b$};
\node[above] () at (.5,1) {$b$};
\draw (0,0) rectangle (1,1);
\end{scope}

% Diagonal
\begin{scope}[shift={(9,0)}]
\draw[fill=black!20] (0,0) rectangle (1,1);
\draw[thick] (0,0) -- (1,1);
\end{scope}

% Top row

% State turn left
\begin{scope}[shift={(0,1.75)}]
\fill[black!20] (0,0) rectangle (1,1);
\draw[densely dotted] (.5,.5) -- (.5,1);
\draw[thick,->] (.5,0) -- (.5,.5) -- (0,.5);
\node[below] () at (.5,0) {$s,q$};
\node[above] () at (.5,1) {$t$};
\node[left] () at (0,.5) {$r$};
\draw (0,0) rectangle (1,1);
\end{scope}

% State from left
\begin{scope}[shift={(1.5,1.75)}]
\fill[black!20] (0,0) rectangle (1,1);
\draw[densely dotted] (.5,0) -- (.5,.5);
\draw[thick] (.5,.5) -- (.5,1);
\draw[thick,->] (0,.5) -- (.5,.5);
\node[below] () at (.5,0) {$s$};
\node[above] () at (.5,1) {$s,q$};
\node[left] () at (0,.5) {$q$};
\draw (0,0) rectangle (1,1);
\end{scope}

% State new block
\begin{scope}[shift={(3,1.75)}]
\fill[black!20] (0,0) rectangle (1,1);
\draw[thick,->] (0,.5) -- (.5,.5);
\draw[thick] (.5,.5) -- (.5,1);
\node[left] () at (0,.5) {$q$};
\node[above] () at (.5,1) {$B,q$};
\draw (0,0) rectangle (1,1);
\end{scope}

% State from right
\begin{scope}[shift={(4.5,1.75)}]
\fill[black!20] (0,0) rectangle (1,1);
\draw[densely dotted] (.5,0) -- (.5,.5);
\draw[thick] (.5,.5) -- (.5,1);
\draw[thick,->] (1,.5) -- (.5,.5);
\node[below] () at (.5,0) {$s$};
\node[above] () at (.5,1) {$s,q$};
\node[right] () at (1,.5) {$q$};
\draw (0,0) rectangle (1,1);
\end{scope}

% State turn right
\begin{scope}[shift={(6,1.75)}]
\fill[black!20] (0,0) rectangle (1,1);
\draw[densely dotted] (.5,.5) -- (.5,1);
\draw[thick,->] (.5,0) -- (.5,.5) -- (1,.5);
\node[below] () at (.5,0) {$s,q$};
\node[above] () at (.5,1) {$t$};
\node[right] () at (1,.5) {$r$};
\draw (0,0) rectangle (1,1);
\end{scope}

% Input special signal
\begin{scope}[shift={(7.5,1.75)}]
\fill[black!20] (0,0) rectangle (1,1);
\draw[densely dotted] (.5,.5) -- (.5,1);
\draw[dashed] (.5,0) -- (.5,.5);
\draw[fill=white] (.5,.5) circle (.25);
\node () at (.5,.5) {$a$};
\node[above] () at (.5,1) {$ab$};
\node[below] () at (.5,0) {$b$};
\draw (0,0) rectangle (1,1);
\end{scope}

% Input diagonal
\begin{scope}[shift={(9,1.75)}]
\fill[black!20] (0,0) rectangle (1,1);
\draw[densely dotted] (.5,.5) -- (.5,1);
\draw[thick] (0,0) -- (.5,.5);
\draw[fill=white] (.5,.5) circle (.25);
\node () at (.5,.5) {$a$};
\node[above] () at (.5,1) {$a\#$};
\draw (0,0) rectangle (1,1);
\end{scope}

\end{tikzpicture}
\end{center}
\caption{The sub-alphabet $C_2$. The letters $a$ and $b$ range over $A$, $c$ can be $0$ or $1$, the letter $s$ over the tape alphabet of $M$, the letter $q_0$ can be either of the initial states $q_1$ and $q_2$, and in the first (fourth) tile on the top row we require that the machine $M$ writes $t \in \Sigma$ on the tape, switches to state $r$ and steps to the left (right, respectively) when reading the letter $s \in \Sigma$ in state $q$.}
\label{fig:C2Tiles}
\end{figure}
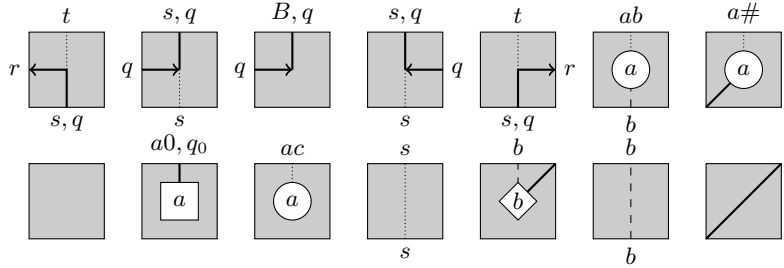

The white squares and circles of $C_2$ must be placed on the letters of the input word $w \in A^*$ of the computation square, the square on the leftmost letter and circles on the rest. The $A$-letters of these tiles must match the letters of $w$, and the second component is $1$ if the tile lies on the corner of the input square, $0$ if not, $b \in A$ in the presence of a vertical signal, and $\#$ in the presence of a diagonal signal. Such signals are sent by a white diamond tile (called a \emph{candidate error}), which can only be placed on the interior tiles of the input square, and whose letter must match the letter on the first layer $x$. Other tiles of $C_2$ simulate the machine $M$, which can never halt in a valid configuration. See Figure~\ref{fig:Computation} for a visualization. We also require that for a pattern $\begin{smallmatrix} c_2 \\ c_1 \end{smallmatrix}$ to be valid, where $c_i \in C_i$ for $i \in \{1,2\}$, the tile $c_2$ should have a gray south border with no lines. Other adjacency rules between tiles of $C_1$ and $C_2$ are explained by Figure~\ref{fig:CTiles} (a).

We now describe the machine $M$. Note first that from an input $u \in \Sigma^*$ one can deduce the input word $w \in A^*$, the height $h \in \N$ of the input square, and the positions and contents of all candidate errors. Now, when started in the state $q_1$, the machine checks that there are no candidate errors at all, that $|w| = h^2$, and that the square pattern $P \in A^{h \times h}$, defined by $P_{(i,j)} = w_{i h + j}$ for all $i, j \in [0,h-1]$, is in $\B(\Xs)$. If all this holds, $M$ runs forever (the check for $P \in \B(\Xs)$ can indeed take infinitely many steps). %In other words, $M$ checks that $w$ correctly encodes \emph{some} square pattern in the language $\B(\Xs)$.
When started in $q_2$, the machine checks that there is exactly one candidate error at some position $(i,j) \in [0,h-1]^2$ of the input square containing some letter $b \in A$, and that one of $|w| \neq h^2$ or $w_{i h + j} \neq b$ holds. If this is the case, $M$ enters an infinite loop, and halts otherwise. %More concretely, $M$ checks that $w$ does not correctly encode the contents of the input square.

\begin{figure}[htp]
\begin{center}
\begin{tikzpicture}[scale=.4]

\fill[black!20] (1.5,2.5) rectangle (20,14);

\draw[very thick] (1.5,14) -- (1.5,2.5) -- (20,2.5);
\draw[dashed] (3.5,3.75) -- (3.5,6.5);
\draw[thick] (3.5,3.5) -- (6.5,6.5);

\foreach \x in {2.5,3.5,...,12.5}{
	\draw[densely dotted] (\x,6.5) -- (\x,14);
}
\foreach \y/\xfrom/\xto in {1/0/1,2/1/2,3/2/3,4/3/2,5/2/3,6/3/4,7/4/5}{%,8/5/6}{
	%\draw[thick,->] (\xfrom+2.5,\y+5.5) -- (\xfrom+2.5,\y+6.5) -- (\xto+2.5,\y+6.5);
	\draw[thick,->] (\xfrom+2.5,\y+5.5) -- (\xfrom+2.5,\y+6.5) -- (\xto/2+\xfrom/2+2.5,\y+6.5);
	\draw[thick,->] (\xto/2+\xfrom/2+2.5,\y+6.5) -- (\xto+2.5,\y+6.5);
}
\draw[fill=white] (2.25,6.25) rectangle (2.75,6.75);
\foreach \x in {3.5,4.5,...,12.5}{
	\draw[fill=white] (\x,6.5) circle (.25);
}
\draw[thick] (7.5,13.5) -- (7.5,14);

\draw[fill=white] (3.5,3.25) -- (3.25,3.5) -- (3.5,3.75) -- (3.75,3.5) -- cycle;

\draw (0,1) grid (20,14);

\end{tikzpicture}
\end{center}
\caption{An infinite computation square with an input word of length $11$ and a single candidate error.}
\label{fig:Computation}
\end{figure}
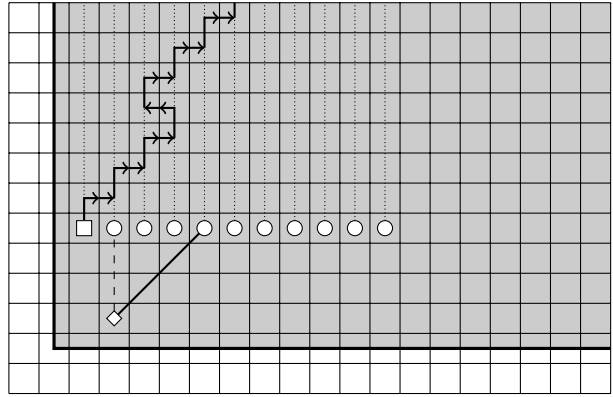

The definition of %the SFT 
$\Ss$ is now complete, and it can be realized using a set %$F \subset A^{3 \times 3} \times B^{3 \times 3} \times C^{3 \times 3}$
$F$ of forbidden patterns of size $3 \times 3$. We define the quantifier-free formula $\psi(\vec n, Y, Z)$ as $\neg \bigvee_{P \in F} \psi_P$, where $\psi_P$ states that the pattern $P$ occurs at the coordinate $\vec n$. This is easily doable using the adjacency functions, color predicates and the %second-order 
variables $Y$ and $Z$. If we fix some values $y \in B^{\Z^2}$ and $z \in C^{\Z^2}$ for the variables $Y$ and $Z$, then $x \models \forall \vec n \psi(\vec n, y, z)$ holds for a given $x \in A^{\Z^2}$ if and only if $(x, y, z) \in \Ss$.

Let $x \in A^{\Z^2}$ be arbitrary. We %claim
need to show that $x \models \phi$ holds if and only if $x \in \Xs$. %, from which the original result follows. 
Suppose first that $x$ models $\phi$, and let $\vec v \in \Z^2$ and $h \geq 1$. Let $y \in \Ys$ be a configuration whose input square has interior $D = \vec v + [0, h-1]^2$, and whose input word correctly encodes the pattern $x|_D$. By assumption, there exists $z \in C^{\Z^2}$ such that $(x,y,z) \in \Ss$, so that the southwest neighbor of $\vec v$ is the southwest corner of a computation square in $z$, %The computation square
which is necessarily infinite, since no defects occur in $y$. %Then the machine $M$ runs forever in the computation square, 
In this square, $M$ runs forever, and it cannot be initialized in the state $q_2$ as the encoding of the input square is correct. Thus its computation proves that $x|_D \in \B(\Xs)$. Since $D$ was an arbitrary square domain, we have $x \in \Xs$.

Suppose then $x \in \Xs$, and let $y \in B^{\Z^2}$ be arbitrary. We construct a configuration $z \in C^{\Z^2}$ such that $(x,y,z) \in \Ss$, which proves $x \models \phi$. First, let $S \subset \Z^2$ be the set of seeds in $y$, and for each $\vec s \in S$, let $\ell(\vec s) \in \N \cup \{\infty\}$ be the height of the maximal square $D(\vec s) = \vec s + [0, \ell(\vec s)-1]^2$ whose interior contains no defects. We claim that $D(\vec s) \cap D(\vec r) = \emptyset$ holds for all $\vec s \neq \vec r \in S$. Suppose the contrary, and let $\vec v \in D(\vec s) \cap D(\vec r)$ be lexicographically minimal. Then $\vec v$ is on the south border of $D(\vec s)$ and the west border of $D(\vec r)$ (or vice versa). Since these borders contain no defects, $y_{\vec v}$ is a south border tile and a west border tile, a contradiction.

Now, we can define every $D(\vec s)$ to be a computation square in $z$. If it contains an input square and an associated input word which correctly encodes its contents, we initialize the simulated machine $M$ in the state $q_1$. Then the computation does not halt, since the input square contains a pattern of $\B(\Xs)$. Otherwise, we initialize $M$ in the state $q_2$, and choose a single candidate error from the input square such that it does not halt, and thus produces no forbidden patterns. Then $(x,y,z) \in \Ss$, completing the proof. \qed
\end{proof}

We have now characterized every level of the u-MSO hierarchy. The first level $\bar \Pi_0 = \bar \Sigma_0$ contains the threshold counting shifts and equals $\bar \Pi_1$ by Lemma~\ref{lem:UnivDrop}, the class $\bar \Sigma_1 = \bar \Sigma_2$ contains the sofic shifts, and the other levels coincide with $\Pi^0_1$.

The quantifier alternation hierarchy of MSO-definable picture languages %, or sets of finite rectangular patterns, 
was shown to be strict in \cite{Sc97}. It is slightly different from the u-MSO hierarchy, as existential first-order quantification is allowed. However, in the case of pictures we know the following. Any MSO formula $\mathcal{Q}_L \exists \vec n \mathcal{Q}_R \phi$, where $\mathcal{Q}_L$ and $\mathcal{Q}_R$ are strings of quantifiers, is equivalent to a formula of the form $\mathcal{Q}_L \exists X \mathcal{Q}_R \forall \vec n \psi$, where $\phi$ and $\psi$ are quantifier-free. See \cite[Section 4.3]{MaSc08} for more details. Thus the analogue of the u-MSO hierarchy for picture languages is infinite. The proof of the result of \cite{Sc97} relies on the fact that one can simulate computation within the pictures, and the maximal time complexity depends on the number of alternations. In the case of infinite configurations, this argument naturally falls apart.

Finally, Theorem~\ref{thm:CHierarchy} has the following corollary (which was also proved in \cite{JeTh13}).

\begin{corollary}
\label{cor:Pi01Definable}
Every $\Pi^0_1$ subshift is MSO-definable.
\end{corollary}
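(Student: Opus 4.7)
The plan is to read off the corollary from Theorem~\ref{thm:CHierarchy} with only a cosmetic translation. By that theorem we have $\Pi^0_1 = \bar\Pi_2$, so any $\Pi^0_1$ subshift $\Xs$ equals $\Xs_\phi$ for some formula
\[ \phi = \forall Y[B^{\Z^2}] \exists Z[C^{\Z^2}] \forall \vec n\, \psi(\vec n, Y, Z) \]
produced by the construction in the proof of Theorem~\ref{thm:CHierarchy}, where $\psi$ is quantifier-free.

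What remains is to argue that $\phi$, which is \emph{a priori} an extended MSO formula with configuration variables ranging over the two full shifts $B^{\Z^2}$ and $C^{\Z^2}$, can be viewed as an honest MSO formula. For this I would invoke the statement made without proof in Section~2.2: if all subshifts appearing as ranges of configuration variables in an extended MSO formula are MSO-definable, then the set it defines is MSO-definable. Both $B^{\Z^2}$ and $C^{\Z^2}$ are full shifts, and each is trivially MSO-definable (the defining formula is a tautology such as $\forall \vec n\, (\vec n = \vec n)$, or equivalently $\bigvee_{b \in B} P_b(\vec n)$ once an alphabet is fixed). Hence $\Xs_\phi = \Xs$ is MSO-definable.

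No real obstacle is expected: the entire content of the corollary is already carried by Theorem~\ref{thm:CHierarchy}, and the only minor point is the passage from extended MSO to plain MSO, which is handled by the already-cited conversion principle applied to the simplest possible case (full-shift ranges).
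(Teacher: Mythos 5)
Your proposal is correct and is essentially the argument the paper intends: the corollary is immediate from Theorem~\ref{thm:CHierarchy} (the paper gives no separate proof), and the only point needing comment is the passage from an extended MSO formula with full-shift configuration variables to a plain MSO formula, which you correctly handle via the conversion principle stated in Section~2.2. Nothing further is needed.
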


\section{Other $C$-u-MSO Hierarchies}

Next, we generalize Theorem~\ref{thm:CHierarchy} to hierarchies of $\Pi^0_k$-u-MSO formulas. The construction is similar to the above but easier, since we can restrict the values of the variable $Y$ to lie in a geometrically well-behaved subshift.

\begin{theorem}
\label{thm:MoreHierarchy}
For all $k \geq 1$ and $n \geq 2$ we have $\Pi^0_{k+1} = \bar \Pi_n[\Pi^0_k]$. Furthermore, $\Pi^0_2 = \bar \Pi_n[\mathrm{SFT}]$ for all $n \geq 2$.
\end{theorem}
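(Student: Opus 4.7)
The upper bounds $\bar \Pi_n[\Pi^0_k] \subset \Pi^0_{k+1}$ and $\bar \Pi_n[\mathrm{SFT}] \subset \Pi^0_2$ follow immediately from Corollary~\ref{cor:UpperBounds} together with $\mathrm{SFT} \subset \Pi^0_1$, and since $\bar \Pi_2[C] \subset \bar \Pi_n[C]$ for every $n \geq 2$, the work reduces to the two lower bounds $\Pi^0_{k+1} \subset \bar \Pi_2[\Pi^0_k]$ and $\Pi^0_2 \subset \bar \Pi_2[\mathrm{SFT}]$.

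My plan is to repeat the construction of Theorem~\ref{thm:CHierarchy} with one crucial simplification. Since the universally quantified $Y$ now ranges over a subshift $\Ys$ of my own choosing rather than the binary full shift, I let $\Ys$ be essentially the internal SFT also named $\Ys$ in the proof of Theorem~\ref{thm:CHierarchy}. That subshift already enforces an ``at most one finite input square carrying an input word'' structure on every $y$, so the entire seed/defect/computation-square apparatus of the previous proof becomes superfluous. The existentially quantified $Z$ simply lays out, above the input square of $y$, the computation of a suitable Turing machine $M$ on the input word, and the quantifier-free part $\forall \vec n \psi$ enforces local consistency between $x$, $y$, $z$, and the transitions of $M$.

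For $\Pi^0_2 \subset \bar \Pi_2[\mathrm{SFT}]$, I would unfold $P \in \B(\Xs)$ as $\forall n \exists m \, R'(P, n, m)$ for a decidable $R'$, encode the integer $n$ in the alphabet of the SFT $\Ys$ next to the input square, and take $\Zs$ to be the SFT that carries both an integer $m$ and a finite tableau verifying $R'(P, n, m)$. The two-initial-states $q_1/q_2$ trick from Theorem~\ref{thm:CHierarchy} again handles miscodings of the input square, so that the verification becomes a direct analogue of that in Theorem~\ref{thm:CHierarchy}: if $x \in \Xs$, then for every $y$ encoding $(x|_D, n)$ some witness $m$ (and hence a valid $z$) exists, while conversely the existence of $z$ for every $n$ forces $\forall n \exists m \, R'(x|_D, n, m)$, i.e. $x|_D \in \B(\Xs)$.

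For the general $\Pi^0_{k+1}$ case, I would proceed by induction on $k$, using the fact that a $\Sigma^0_k$ predicate $R$ can be decided by a Turing machine with a $\Pi^0_{k-1}$ oracle that halts exactly on positive instances. I enlarge $\Ys$ and $\Zs$ from SFTs to $\Pi^0_k$ subshifts that additionally supply the required oracle answers to this computation, with the correctness of those answers certified by halting/non-halting sub-computations whose own $\Pi^0_{k-1}$ oracle is encoded recursively by the same scheme one level lower. I expect this faithful encoding of a $\Pi^0_{k}$ oracle inside a $\Pi^0_k$ subshift---arranging the forbidden pattern set to remain $\Sigma^0_k$ while still forcing truthful oracle values---to be the sole genuine technical obstacle; once it is in place, the remainder of the argument is a direct extension of the $\Pi^0_2$ case.
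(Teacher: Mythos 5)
Your reduction to the two lower bounds, and your observation that the universally quantified variable can now be confined to a structured subshift so that the seed/defect apparatus of Theorem~\ref{thm:CHierarchy} becomes unnecessary, both match the paper. The gap is in your decomposition of the arithmetical condition, already at the $\Pi^0_2$/SFT level. You unfold $P \in \B(\Xs)$ as $\forall n \exists m\, R'(P,n,m)$, put $n$ into the universal layer $Y$, and ask the existential layer $Z$ to supply a \emph{halting, accepting} tableau witnessing $\exists m\, R'(x|_D,n,m)$. This cannot work: for a fixed $y$ (hence fixed $D$ and $n$), the set of admissible $z$ is an effectively closed class relative to $x$ and $y$ (the SFT $\Zs$ intersected with the locally checked condition $\forall \vec n\,\psi$), so by compactness its non-emptiness is a $\Pi^0_1$, i.e.\ co-r.e., condition in the relevant finite data, whereas you need it to coincide with the genuinely $\Sigma^0_1$ condition $\exists m\, R'(x|_D,n,m)$. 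Concretely, no local rules force a simulated computation to terminate: any SFT containing arbitrarily large partial tableaux also contains configurations with an infinite, never-halting run, and such a $z$ satisfies every local constraint, so the existential quantifier is witnessed vacuously for every $x$. The paper's decomposition is the mirror image of yours: the forbidden set of a $\Pi^0_2$ subshift is $\Sigma^0_2$, written as $\{P : \exists n\, R(w(P),n)\}$ with $R$ a $\Pi^0_1$ predicate, and this \emph{co-r.e.} predicate is absorbed into the universally quantified SFT $\Ys$ via a simulated machine required to run forever (non-halting \emph{is} enforceable by local rules), so that the input words occurring in $\Ys$ are exactly the certified pairs $w\#^n$ with $R(w,n)$. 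The existential $Z$ then only certifies the decidable fact that $w$ fails to encode $x|_D$, exactly as in Theorem~\ref{thm:CHierarchy}.

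For general $k$ you propose simulating $\Pi^0_{k-1}$ oracles inside subshifts, with oracle answers certified recursively, and you rightly flag this as the main unsolved obstacle; but it is an obstacle the paper never meets. Having written the $\Sigma^0_{k+1}$ forbidden set as $\exists n\, R(w,n)$ with $R$ in $\Pi^0_k$, the paper simply \emph{defines} $\Ys$ to be the subshift whose input words are the pairs $w\#^n$ satisfying $R(w,n)$; such a subshift is $\Pi^0_k$ by the definition of that class, and quantifying $Y$ over it makes the truth of $R$ come for free --- no oracle machinery, no induction on $k$, and a machine simulation only in the case $k=1$ to see that $\Ys$ may be taken to be an SFT. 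The moral your proposal misses is that the extra power at each level comes from the definitional complexity of the subshift being quantified over, not from simulating higher-type computation inside configurations.
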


\begin{proof}[sketch]
As in Theorem~\ref{thm:CHierarchy}, it suffices to show that for a given $\Pi^0_{k+1}$ subshift $\Xs \subset A^{\Z^2}$, there is a $\bar \Pi_2[\Pi^0_k]$ formula $\phi = \forall Y[\Ys] \exists Z[\Zs] \forall \vec n \psi$ such that $\Xs_\phi = \Xs$. In our construction, $\Ys \subset B^{\Z^2}$ is a $\Pi^0_k$ subshift and $\Zs = C^{\Z^2}$ is a full shift.

For a square pattern $P \in A^{h \times h}$, define the word $w(P) \in A^{h^2}$ by $w_{i h + j} = P_{(i,j)}$ for all $i, j \in [0, h-1]$. Let $R \subset A^* \times \N$ be a $\Pi^0_k$ predicate such that the set
\[ F = \{ P \in A^{h \times h} \;|\; h \in \N, \exists n \in \N : R(w(P), n) \} \]
satisfies $\Xs_F = \Xs$.
As in %the proof of 
Theorem~\ref{thm:CHierarchy}, configurations of %the subshift 
$\Ys$ may contain one input square with an associated input word. This time, the input word is of the form $w \#^n$ for some $w \in A^*$, $n \in \N$ and a new symbol $\#$. %We enforce by a $\Sigma^0_k$ set of forbidden patterns 
As $\Ys$ is $\Pi^0_k$, we can enforce that $R(w, n)$ holds, so that $w$ does \emph{not} encode any square pattern of $\Xs$. This %condition 
can be enforced by SFT rules if $k = 1$: a simulated Turing machine checks %the $\Pi^0_1$ predicate 
$R(w,n)$ by running forever if it holds. As before, the existential layer $\Zs$ enforces that $w$ does \emph{not} correctly encode the contents of the input square in the first layer.
%a configuration $x \in A^{\Z^2}$, using local rules defined in the formula $\psi$ as in the previous construction.

Let $x \in \Xs$ and $y \in \Ys$ be arbitrary. If $y$ has a finite input square $D \in \Z^2$ and input word $w \#^n$, then $w \in A^*$ cannot correctly encode the pattern $x|_D \in \B(\Xs)$, and thus a valid choice for the variable $Z$ exists. Degenerate cases of $y$ (with, say, an infinite input square) are handled as in Theorem~\ref{thm:CHierarchy}. Thus we have $x \models \phi$. Next, suppose that $x \notin \Xs$, so there is a square domain $D \subset \Z^2$ with $x|_D \notin \B(\Xs)$. Construct $y \in \Ys$ such that the input square has domain $D$, the word $w \in A^*$ correctly encodes $x|_D$, and the number $n \in \N$ of $\#$-symbols is such that $R(w, n)$ holds. For this value of $Y$, no valid choice for $Z$ exists, and thus $x \not\models \phi$. \qed
\end{proof}

Corollary~\ref{cor:Pi01Definable}, Theorem~\ref{thm:MoreHierarchy} and a simple induction argument show the following. %we obtain the following result by a simple induction argument.

\begin{corollary}
For every $k \in \N$, every $\Pi^0_k$ subshift is MSO-definable.
\end{corollary}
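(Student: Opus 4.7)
The plan is to proceed by induction on $k$, with Corollary~\ref{cor:Pi01Definable} serving as the base case and Theorem~\ref{thm:MoreHierarchy} driving the inductive step. The case $k=0$ is subsumed by $k=1$ (since $\Pi^0_0 \subset \Pi^0_1$), and Corollary~\ref{cor:Pi01Definable} already states that every $\Pi^0_1$ subshift is MSO-definable, so the base case is immediate.

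For the inductive step, I would assume that every $\Pi^0_k$ subshift is MSO-definable and pick an arbitrary $\Pi^0_{k+1}$ subshift $\Xs \subset A^{\Z^2}$. By Theorem~\ref{thm:MoreHierarchy}, $\Pi^0_{k+1} = \bar\Pi_2[\Pi^0_k]$, so there exists an extended MSO formula
\[ \phi \;=\; \forall Y[\Ys] \; \exists Z[\Zs] \; \forall \vec n \; \psi(\vec n, Y, Z) \]
with $\Xs_\phi = \Xs$, where $\Ys$ and $\Zs$ are $\Pi^0_k$ subshifts and $\psi$ is quantifier-free. By the induction hypothesis, both $\Ys$ and $\Zs$ are themselves MSO-definable.

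The final step is to invoke the closure principle stated (without proof) earlier in the excerpt: if every configuration variable in an extended MSO formula ranges over an MSO-definable subshift, then the set defined by that formula is itself MSO-definable. Applying this to $\phi$ yields that $\Xs = \Xs_\phi$ is MSO-definable, completing the induction.

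The main obstacle, such as it is, lies in making sure the inductive hypothesis meshes with the hypothesis of Theorem~\ref{thm:MoreHierarchy}: the theorem produces a formula whose configuration variables range over $\Pi^0_k$ subshifts, whereas the substitution principle requires those subshifts to be \emph{MSO-definable}. The inductive hypothesis supplies exactly this upgrade, so beyond this bookkeeping there is no real difficulty, and the argument goes through mechanically.
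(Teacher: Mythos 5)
Your proof is correct and is exactly the "simple induction argument" the paper alludes to: base case from Corollary~\ref{cor:Pi01Definable}, inductive step from Theorem~\ref{thm:MoreHierarchy} combined with the closure of MSO-definability under extended MSO formulas whose configuration variables range over MSO-definable subshifts. Nothing further is needed.
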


However, note that the converse does not hold, since one can construct an MSO-formula defining a subshift whose language is not $\Pi^0_k$ for any $k \in \N$.

\section*{Acknowledgments}

I am thankful to Emmanuel Jeandel for introducing me to \cite{JeTh09,JeTh13} and the open problems %related to the u-MSO hierarchy, 
therein, and to Ville Salo for many fruitful discussions. % on the topics of this article.

\bibliographystyle{plain}
\bibliography{../../../bib/bib}{}

\end{document}